\documentclass[12pt,reqno]{amsart}
\usepackage{cmap,mathtools}
\usepackage[T1]{fontenc}
\usepackage[utf8]{inputenc}
\usepackage[english]{babel}
\usepackage{amsfonts,amssymb,amsmath}
\usepackage{footnote,bigints}
\usepackage{array}
\usepackage{adjustbox}
\usepackage{graphicx}
\usepackage{caption}
\usepackage{subcaption}
 \usepackage{enumitem}

\usepackage[pagewise]{lineno}

\DeclareUnicodeCharacter{0308}{\"{}}

\allowdisplaybreaks
\usepackage{graphicx}
\usepackage{epstopdf}
\usepackage{a4wide}
\usepackage{xcolor}
\usepackage[colorlinks=true,linktocpage,pdfpagelabels,
bookmarksnumbered,bookmarksopen]{hyperref}
\definecolor{ForestGreen}{rgb}{0.1,0.6,0.05}
\definecolor{EgyptBlue}{rgb}{0.063,0.1,0.6}
\hypersetup{
	colorlinks=true,
	linkcolor=EgyptBlue,         
	citecolor=ForestGreen,
	urlcolor=olive
}

\usepackage[hyperpageref]{backref}

\newtheorem{theorem}{Theorem}[section]

\newtheorem{definition}{Definition}[section]
\newtheorem{lemma}{Lemma}[section]
\newtheorem{remark}{Remark}[section]

\newtheorem{Example}{Example}[section]

\numberwithin{equation}{section}
\numberwithin{theorem}{section}
\numberwithin{equation}{section}
\numberwithin{theorem}{section}

\usepackage{ulem}
\usepackage{xcolor}
\usepackage[colorlinks=true,linktocpage,pdfpagelabels,
bookmarksnumbered,bookmarksopen]{hyperref}
\definecolor{ForestGreen}{rgb}{0.1,0.6,0.05}
\definecolor{EgyptBlue}{rgb}{0.063,0.1,0.6}
\hypersetup{
	colorlinks=true,
	linkcolor=EgyptBlue,         
	citecolor=ForestGreen,
	urlcolor=olive
}

\subjclass[2020]{Primary  58J50; Secondary 35P15}
\usepackage[hyperpageref]{backref}
\usepackage[foot]{amsaddr}

\DeclareUnicodeCharacter{2212}{-}

\title [Steklov eigenvalues on star-shaped bounded domains]{On Domain monotonicity of Steklov eigenvalues}

\author{Sagar Basak$^1$ \and Sheela Verma$^*$}
\address{$1$ Corresponding author, Department of Mathematical Sciences, Indian Institute of Technology (BHU), Varanasi, India}
\email{sagarbasak.rs.mat22@itbhu.ac.in}

\address{$*$ Department of Mathematical Sciences, Indian Institute of Technology (BHU), Varanasi, India}
\email{sheela.mat@iitbhu.ac.in}

\keywords{Steklov eigenvalues, Star-shaped domain, bi-Lipschitz diffeomorphisms, John’s ellipsoid theorem}

\begin{document}

\begin{abstract}
In this article, we prove a variant of domain monotonicity property for the Steklov eigenvalues of the Laplacian on strictly star-shaped bounded domains contained in $\mathbb{R}^n$. As an application of this result, we obtain an upper and a lower bound for Steklov eigenvalues on convex domains.
\end{abstract}

\maketitle

\section{introduction} 

Let $\Omega $ be a bounded domain in $ \mathbb{R}^{n}$ with Lipschitz boundary $\partial\Omega$. In this article, we consider the Steklov eigenvalue problem for the Laplace operator:
\begin{align*}
    \Delta u = 0 \quad &\text{in } \Omega, \\
    \frac{\partial u}{\partial \nu} = \sigma\, u \quad &\text{on } \partial\Omega,
\end{align*}
where $\nu$ denotes the outward unit normal to $\partial\Omega$, and $\sigma$ is a real number. This problem was first introduced by Steklov~\cite{stekloff1902problemes} in 1902 in the setting of planar domains.
The Steklov spectrum coincides with that of the Dirichlet-to-Neumann(DtN) operator, which assigns to each function $f\in L^{2}(\partial\Omega)$ the normal derivative $\frac{\partial \tilde{f}}{\partial \nu}\in L^2(\partial \Omega)$ of its harmonic extension $\tilde{f}$ to the interior of $\Omega$. Since the DtN operator is self-adjoint and positive, it admits a discrete sequence of real positive eigenvalues,
\begin{align*}
    0=\sigma_0(\Omega)<\sigma_1(\Omega)\leq\sigma_2(\Omega)\leq \cdots 
    \longrightarrow \infty ,
\end{align*}
each repeated according to its multiplicity.

The constant functions are eigenfunctions corresponding to the zero eigenvalue. For each $k\geq 1,$  Steklov eigenvalues $\sigma_k(\Omega)$ are determined by the following variational characterization: 
\begin{align}
   \sigma_k(\Omega)
= \min_{\substack{E \subset H^1(\Omega) \\ \dim(E)=k+1}}
   \; \max_{\substack{u \in E \\ u \ne 0}}
   \frac{\displaystyle\int_{\Omega} \|\nabla u\|^2\,dV}
        {\displaystyle\int_{\partial\Omega} u^2\,dS }.
\end{align}

A topic that has received considerable attention in recent years is how the Steklov spectrum behaves under geometric deformations of the domain. A classical result in this direction is Weinstock’s inequality for simply connected planar domains~\cite{weinstock1954inequalities}, which asserts that, among all domains with fixed boundary length, the round disk uniquely maximizes the first non zero Steklov eigenvalue. Bucur et al.~\cite{bucur2021weinstock} extended this result to higher dimensions, by proving that the ball uniquely maximizes the first non zero Steklov eigenvalue among all bounded open convex sets in $\mathbb{R}^n$ with prescribed perimeter. Brock ~\cite{brock2001isoperimetric} generalized Weinstock's result for domains under volume constraint, by removing any topological and dimensional restriction. The author obtained that, among all domains with fixed volume, the ball uniquely maximizes the first non zero Steklov eigenvalue. For $n\ge 3$, Fraser and Schoen \cite{fraser2019shape} showed that Weinstock's inequality fails by exhibiting a smooth contractible domain with the same boundary measure as the unit ball but with a strictly larger first non zero Steklov eigenvalue.
Escobar~\cite{escobar1999isoperimetric} generalized  Weinstock’s inequality for domains contained in a complete simply connected two-dimensional manifold $M$ with constant Gaussian curvature, showing that geodesic balls maximize the first non zero Steklov eigenvalue among bounded simply connected domains in $M$ with fixed area. Various types of bounds have been proved for Steklov eigenvalues in recent years. For isoperimetric bounds, see \cite{ MR3237730, li2025upper}. For a sharp lower bound of the first non zero Steklov eigenvalues on star-shaped domains, see \cite{verma2018bounds}. Bounds of Steklov eigenvalues on doubly connected domains have been obtained in  \cite{basak2024bounds, fraser2019shape, ftouhi2022place}. For more results on Steklov eigenvalues, we refer to the survey articles \cite{colbois2024some,girouard2017spectral}.  However, Steklov eigenvalues on bounded domains in $\mathbb{R}^n$ do not satisfy the domain monotonicity property. This can be seen from the following example.
\begin{Example}
 Consider a dumbbell-shaped domain $\Omega_\epsilon$, consisting of two disks $B_2((-3,0))$ and $B_2((3,0))$ of same radius $2$ centered at $(-3,0)$ and $(3,0)$ respectively, which are connected by a thin passage
\[
A = (-1,1)\times(-\epsilon,\epsilon).
\]
As shown in~\cite{girouard2017spectral}, one has $\sigma_k(\Omega_\epsilon) \to 0$ as $\epsilon \to 0$ for every $k \in \mathbb{N}$.
Now choose two disks $B_{1}((3,0))$ and $B_{6}((0,0))$ in $\mathbb{R}^2$ of radius $1, 6$ centered at $(0,0)$ and $(3,0)$respectively, then $B_1((3,0)) \subset B_2((3,0)) \subset \Omega_\epsilon$ and 
$\Omega_\epsilon \subset B_{6}((0,0))$. Then it can be easily seen that for each $k\in \mathbb{N},$
\begin{align*}
    \sigma_{k}(B_{1}((3,0))) > \sigma_k(\Omega_\epsilon) \quad
    \text{and}\quad 
    \sigma_{k}(B_{6}((0,0))) > \sigma_k(\Omega_\epsilon), \quad \text{for very small} \,\, \epsilon > 0.
\end{align*}
\end{Example}

The main purpose of this paper is to establish a monotonicity-type result for Steklov eigenvalues on {strictly} star-shaped domains. As an application of this result, we derive an upper and a lower bound for Steklov eigenvalues on convex bounded domains in $\mathbb{R}^n$.
\begin{definition}
Let $\Omega$ be a bounded domain in $\mathbb{R}^n$ with $C^1$ boundary, and assume that $\textbf{0}\in \Omega.$ Deonted by $\nu(x)$ the outer normal vector to  the boundary at $ x\in\partial \Omega.$ Then $\Omega$ is said to be star-shaped with respect to the origin if 
\begin{equation}\label{star}
    \langle x, \nu(x)\rangle \geq 0,\quad \text{for all} \quad x\in \partial \Omega.
\end{equation}
If the strict inequality holds in \eqref{star}, then we say that $\Omega$ is strictly star-shaped.
\end{definition}
Our main result is stated below.

\begin{theorem}\label{thm:main}
Let $\Omega_1$ and $\Omega_2$ be strictly star-shaped bounded $C^1$ domains in $\mathbb{R}^{n}, n\geq 2$ with respect to the origin $\textbf{0}$ and 
$\Omega_1 \subset \Omega_2$. Then there exists a constant $C > 0$, depending on the geometry 
of $\Omega_1$ and $\Omega_2$, such that for every $k \in \mathbb{N}$,
\begin{equation}\label{main inequality}
    \sigma_{k}(\Omega_2) \leq C\, \sigma_{k}(\Omega_1).
\end{equation}
Further, the equality holds if and only if $\Omega_1$ and $\Omega_2$ are concentric balls.
\end{theorem}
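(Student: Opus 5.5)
Since both domains are strictly star-shaped with respect to $\mathbf{0}$, I will write each boundary radially: $\partial\Omega_i=\{R_i(\theta)\theta:\theta\in\mathbb{S}^{n-1}\}$ with $R_i$ positive and $C^1$ on $\mathbb{S}^{n-1}$. Strict star-shapedness, $\langle x,\nu\rangle>0$, together with compactness of $\partial\Omega_i$, gives $R_i$ bounded above and below and $|\nabla_{\mathbb{S}}R_i|$ bounded, so $\Omega_i$ is uniformly Lipschitz-radial. The inclusion $\Omega_1\subset\Omega_2$ means $R_1\le R_2$. I will define the radial map
\[
\Phi:\Omega_1\to\Omega_2,\qquad \Phi(r\theta)=\frac{R_2(\theta)}{R_1(\theta)}\,r\theta ,
\]
which is a bi-Lipschitz homeomorphism mapping $\partial\Omega_1$ onto $\partial\Omega_2$ (a diffeomorphism away from the origin). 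Its Lipschitz constants are controlled by $\max R_i$, $\min R_i$ and $\max|\nabla_{\mathbb{S}} R_i|$.

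For the comparison, let $E\subset H^1(\Omega_1)$ be a $(k+1)$-dimensional space realizing $\sigma_k(\Omega_1)$. I will set $\tilde E=\{u\circ\Phi^{-1}:u\in E\}\subset H^1(\Omega_2)$, which still has dimension $k+1$. By change of variables,
\[
\int_{\Omega_2}|\nabla(u\circ\Phi^{-1})|^2\le \sup\|D\Phi^{-1}\|^2\sup|\det D\Phi|\int_{\Omega_1}|\nabla u|^2 .
\]
On the boundary, $\Phi|_{\partial\Omega_1}$ composes the parametrizations $\theta\mapsto R_i(\theta)\theta$, whose area elements are $R_i^{n-1}\sqrt{1+|\nabla_{\mathbb S}R_i|^2/R_i^2}\,d\theta$. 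Hence
\[
\int_{\partial\Omega_2}(u\circ\Phi^{-1})^2dS\ge \inf_\theta J(\theta)\int_{\partial\Omega_1}u^2dS,
\]
where $J$ is the ratio of these area elements. Feeding $\tilde E$ into the min–max characterization gives $\sigma_k(\Omega_2)\le C\sigma_k(\Omega_1)$ with an explicit constant
\[
C=\frac{\sup\|D\Phi^{-1}\|^2\,\sup|\det D\Phi|}{\inf J},
\]
independent of $k$.

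The main obstacle is the equality case, which requires computing the constant carefully enough that it is sharp. For that I will compute $D\Phi$ in polar coordinates. With $\rho=R_2/R_1\ge1$, the map $\Phi$ scales radially and tangentially by $\rho$, plus a shear term $r\nabla_{\mathbb S}\rho$. So $\det D\Phi=\rho^n$ and $\|D\Phi^{-1}\|\ge \rho^{-1}$, with equality exactly when $\nabla\rho=0$. For concentric balls, $\Phi$ is the dilation $x\mapsto (R_2/R_1)x$, and then $C=R_2/R_1$ and $\sigma_k(\Omega_2)=\sigma_k(\Omega_1)R_1/R_2$. That looks like a strict inequality, so I need to be careful here.

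To make equality meaningful, I will instead define $C$ via $\sup$ and $\inf$ quantities normalized so that $C=\sigma_k(\Omega_2)/\sigma_k(\Omega_1)$ in the ball case, for example $C=(\max R_1/\min R_2)\cdot(\text{distortion factors}\ge1)$. Then equality in \eqref{main inequality} forces equality in every estimate above. In particular the distortion factors must equal $1$, which forces $\nabla_{\mathbb S}R_i\equiv0$, and $\max R_1=\min R_1$ and $\max R_2=\min R_2$. So both $R_i$ are constant and the domains are concentric balls centered at $\mathbf{0}$. Conversely, for concentric balls the Steklov spectra are explicitly $\sigma=\ell/R$ ($\ell$ the spherical harmonic degrees), so equality holds.
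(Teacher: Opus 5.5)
The inequality part of your proposal is correct and matches the paper's argument. The paper's map $F(t\omega)=\frac{r_{\Omega_1}(\omega)}{r_{\Omega_2}(\omega)}t\omega$ is your $\Phi^{-1}$, eigenfunctions are transplanted by $u\mapsto u\circ F$ into the min--max, and the energy and boundary terms are controlled by $L_F^2L_{F^{-1}}^n$ and $L_F^{-(n-1)}$. Your pointwise Jacobian and area-element factors are a sharper version of these, and you are right that $\Phi$ is only a diffeomorphism away from $\mathbf 0$.

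The gap is the equality case. It is the only part of the statement where the value of $C$ matters, and you never commit to a constant.

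First, your computation for balls is wrong. With $\Phi=(R_2/R_1)\,\mathrm{Id}$ you have $\sup\|D\Phi^{-1}\|^2=(R_1/R_2)^2$, $\sup\det D\Phi=(R_2/R_1)^n$ and $\inf J=(R_2/R_1)^{n-1}$. Hence $C=R_1/R_2$, and equality does hold.

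The real reason your natural constant cannot be used is different. For \emph{any} strictly star-shaped $\Omega_1$ and $\Omega_2=m\Omega_1$, $\Phi$ is the dilation by $m$ and every factor is attained, so $C=1/m=\sigma_k(\Omega_2)/\sigma_k(\Omega_1)$. This is exactly the paper's Remark at the end of Section~3. So with that constant, ``equality iff concentric balls'' is false. Moreover, any ``distortion factors'' built from $\nabla_{\mathbb S}\rho$ and the ratio of area elements equal $1$ for homothetic pairs. They therefore cannot force $\nabla_{\mathbb S}R_i\equiv0$, as you assert.

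What is needed is a specific constant, padded by a geometric factor that is tight only for balls, together with a proof that it dominates the transplantation constant $C'$. The paper does this through the Lipschitz bounds $L_1=\sup A\,\bigl(1+\pi(\tan\alpha_1+\tan\alpha_2)\bigr)$ and $L_2=\sup B\,\bigl(1+\pi(\tan\alpha_1+\tan\alpha_2)\bigr)$, with $C=L_1^{n+1}L_2^n$. Equality forces $|\det DF^{-1}|\equiv L_2^n$, hence $B\equiv L_2$, hence $\tan\alpha_1+\tan\alpha_2=0$, i.e.\ all normals are radial.

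Your $\max R_1/\min R_2$ idea can also be made to work, but it is that factor, not the distortion factors, that does the job. Write $(D\Phi)^{-1}=\rho^{-1}\bigl(I-\theta\,(\nabla_{\mathbb S}\log\rho)^T\bigr)$ and $\det D\Phi=\rho^n$. Set $D_1=\sup_\theta\|I-\theta(\nabla_{\mathbb S}\log\rho)^T\|^2\ge1$ and $D_2=\sup_\theta \rho^{n-1}/J\ge1$. Then
\[
C'\le\frac{1}{\inf\rho}\Bigl(\frac{\sup\rho}{\inf\rho}\Bigr)^nD_1D_2\le\frac{\max R_1}{\min R_2}\Bigl(\frac{\sup\rho}{\inf\rho}\Bigr)^nD_1D_2=:C .
\]
For $k\ge1$ we have $\sigma_k(\Omega_1)>0$, so equality forces equality in the energy estimate for a maximizing $u$. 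Since $\nabla u\neq0$ a.e., this gives $\rho(\theta)^n=\det D\Phi=\sup\det D\Phi$ a.e. So $\rho$ is constant and $\Omega_2=\rho\,\Omega_1$. Then $\sigma_k(\Omega_2)=\sigma_k(\Omega_1)/\rho$ and $C=\max R_1/(\rho\min R_1)$ together force $\max R_1=\min R_1$, i.e.\ concentric balls.

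As written, your proposal contains neither such a constant nor the bound $C'\le C$, so the equality statement is not proved.
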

As an application of this theorem, we have the following result for convex domain.
\begin{theorem}\label{application}
Let $\Omega \subset \mathbb{R}^n$ be a bounded convex domain with $C^{1}$ boundary, and let $E$ be an ellipsoid of maximal volume contained in $\Omega$. Then there exist constants $C_1, C_2 > 0$, depending only on the geometry of $\Omega$ and $E$, such that for every $k \in \mathbb{N}$,
\begin{align}
\frac{1}{C_2\,n}\,\sigma_k(E) \le \sigma_k(\Omega) \le C_1\,\sigma_k(E).
\end{align}
Furthermore, let $\phi : \mathbb{R}^n \to \mathbb{R}^n$ be a non-singular affine transformation satisfying $\hat B = \phi^{-1}(E)$, and set $\widetilde{\Omega} := \phi^{-1}(\Omega)$. Then there exist constants $c_1, c_2 > 0$, depending only on the geometry of $\widetilde{\Omega}$ and $\hat B$, such that for all $k \in \mathbb{N}$,
\begin{align}
    \frac{1}{n c_2}\sigma_k(\hat B)
    \le \sigma_k(\widetilde{\Omega})
    \le c_1\, \sigma_k(\hat B),
\end{align}
 where $\hat B$ is the unit ball in $\mathbb{R}^n$
\end{theorem}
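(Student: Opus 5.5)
The plan is to apply Theorem~\ref{thm:main} twice: once with $E\subset\Omega$, and once with $\Omega$ inside a suitable dilate of $E$ given by John's ellipsoid theorem. The second part of the statement will then follow by transporting everything through the affine map $\phi$.

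First we arrange the geometry. Since $\Omega$ is a bounded convex $C^1$ domain and $E\subset\Omega$ is its maximal volume ellipsoid, we translate so that the centre of $E$ is the origin $\textbf{0}$. Translations preserve Steklov eigenvalues, so this costs nothing.

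Convexity gives $\langle x-y,\nu(x)\rangle\ge 0$ for all $y\in\overline\Omega$ and $x\in\partial\Omega$. Taking $y$ in a small ball around $\textbf{0}$ contained in $E\subset\Omega$ upgrades this to $\langle x,\nu(x)\rangle\ge r>0$. Hence $\Omega$ is strictly star-shaped with respect to $\textbf{0}$. An ellipsoid centred at the origin is also strictly star-shaped with respect to its centre.

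John's theorem gives $E\subset\Omega\subset nE$, where $nE$ is the dilate about the centre. Applying Theorem~\ref{thm:main} to $E\subset\Omega$ yields $\sigma_k(\Omega)\le C_1\sigma_k(E)$, which is the upper bound.

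For the lower bound we apply Theorem~\ref{thm:main} to $\Omega\subset nE$, which gives $\sigma_k(nE)\le C_2\sigma_k(\Omega)$. We then use scaling. If $u$ is harmonic on $E$, then $u(x/n)$ is harmonic on $nE$. The Dirichlet integral scales by $n^{n-2}$ and the boundary integral by $n^{n-1}$, so $\sigma_k(nE)=\sigma_k(E)/n$. Combining the two facts gives $\frac{1}{nC_2}\sigma_k(E)\le\sigma_k(\Omega)$.

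For the second part, let $\phi$ be the non-singular affine map with $\hat B=\phi^{-1}(E)$, and set $\widetilde\Omega=\phi^{-1}(\Omega)$. Affine images of convex $C^1$ domains are convex $C^1$ domains, and inclusions are preserved. So $\hat B\subset\widetilde\Omega\subset n\hat B$, the dilation being about the centre of $\hat B$, which is $\textbf{0}$ because $\phi$ maps the centre of $\hat B$ to the centre of $E$. Here $\hat B$ is the maximal volume ellipsoid of $\widetilde\Omega$, since affine maps multiply all volumes by the same constant $|\det|$.

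The same argument as before, with Theorem~\ref{thm:main} applied to $\hat B\subset\widetilde\Omega$ and to $\widetilde\Omega\subset n\hat B$, together with $\sigma_k(n\hat B)=\sigma_k(\hat B)/n$, gives $\frac{1}{nc_2}\sigma_k(\hat B)\le\sigma_k(\widetilde\Omega)\le c_1\sigma_k(\hat B)$.

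The main point needing care is the hypothesis check for Theorem~\ref{thm:main}. One must make sure that both domains in each pair are strictly star-shaped with respect to the same point, namely the centre of the John ellipsoid. One must also confirm that the constants $C_1,C_2,c_1,c_2$ depend only on the stated geometric data.

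The step that uses the John containment $\Omega\subset nE$ with the dilation centred at the centre of $E$ is the one to verify carefully. Only this centring makes the origin a common star centre for both domains.
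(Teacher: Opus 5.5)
Your proposal is correct and follows essentially the same route as the paper. You use John's inclusion $E\subset\Omega\subset a+n(E-a)$, apply Theorem~\ref{thm:main} to each of the two inclusions, use the scaling identity $\sigma_k(nE)=\sigma_k(E)/n$, and then repeat the argument after pulling the inclusions back through $\phi$. Your explicit check that $\Omega$, $E$ and $nE$ are strictly star-shaped about the centre of $E$ (via $\langle x,\nu(x)\rangle\ge r$ from a ball $B_r(\mathbf{0})\subset E$) supplies a hypothesis of Theorem~\ref{thm:main} that the paper does not verify.
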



\begin{remark}
A domain monotonicity-type result for the Neumann eigenvalue problem on bounded convex domains was established by K.~Funano \cite{funano2023note}. Using the same argument as in the proof of Theorem~\ref{thm:main}, the main result of \cite{funano2023note} can be extended from convex domains to strictly star-shaped domains. We further emphasize that our idea gives sharp monotonicity type result for the Neumann eigenvalues. More precisely, if $\mu_k(\Omega_1)$ and $\mu_k(\Omega_2)$ denote the $k$-th Neumann eigenvalues of strictly star-shaped bounded domains $\Omega_1$ and $\Omega_2$, respectively with $\Omega_1, \Omega_2$ having $C^1$ boundary and $
\Omega_1 \subset \Omega_2$. Then,
\begin{align*}
    \mu_k(\Omega_2) \leq \tilde{C}\, \mu_k(\Omega_1),
\end{align*}
where $\tilde{C} = L_1^{\,n+2} L_2^{\,n}$, and $L_1, L_2$ are the constants defined in \eqref{Lipschitz constant} and \eqref{inverse Lipschitz constant}, respectively. Moreover, the equality holds if and only if $\Omega_1$ and $\Omega_2$ are concentric balls.
\end{remark}
This article is organized as follows. In Section~\ref{preli}, we recall several auxiliary results on star-shaped domains and bi-Lipschitz diffeomorphisms. Section~\ref{main result} is devoted to the proof of Theorem~\ref{thm:main}. In Section~\ref{bound}, we derive bounds for the Steklov eigenvalues on convex bounded domain as an application of Theorem~\ref{thm:main}.

\section{Preliminaries}\label{preli}

\subsection{Implicit Function Theorem}

Let $\tilde{F}:\mathbb{R}^{m+n}\to \mathbb{R}^n$ be a $C^{1}$ function in a neighborhood of $(x_{0}, y_{0}) \in \mathbb{R}^{m+n}$ such that
\[
\tilde{F}(x_{0}, y_{0}) = 0, \,\, \text{and}\,\, n\times n  \,\, \text{matrix} \quad  \Big[\frac{\partial \tilde{F}^i}{\partial y^j}(x_{0}, y_{0})\Big]\,\, \text{is non singular}.
\]
Then there exists neighborhoods $U\subset \mathbb{R}^m, V \subset \mathbb{R}^n$ of $x_{0},y_0$ respectively, and a $C^1$ function $f:U\to V$  such that  $y_0=f(x_0)$  and $\tilde{F}(x, f(x)) = 0$ for all $x\in U.$ 

\begin{lemma} \label{geodesic distance}
Let $\omega_1,\omega_2 \in \mathbb{S}^{n-1}\subset \mathbb{R}^n$. Denote by $d_{\mathbb{S}^{n-1}}(\omega_1,\omega_2)$
the geodesic distance  between $\omega_1$ and $\omega_2$ in $\mathbb{S}^{n-1}$, and by $\|\omega_1-\omega_2\|$
the Euclidean distance between $\omega_1$ and $\omega_2$ in $\mathbb{R}^n$. Then
\begin{equation*} 
    d_{\mathbb{S}^{n-1}}(\omega_1,\omega_2)
\leq
\frac{\pi}{2}\,\|\omega_1-\omega_2\|.
\end{equation*}
\end{lemma}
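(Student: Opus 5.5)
The plan is to compute both distances explicitly in terms of the angle between the two unit vectors and then compare the resulting functions of that angle. Set $\theta := d_{\mathbb{S}^{n-1}}(\omega_1,\omega_2) \in [0,\pi]$. This is the angle between $\omega_1$ and $\omega_2$, so $\langle \omega_1,\omega_2\rangle = \cos\theta$. This holds because the minimizing geodesic on $\mathbb{S}^{n-1}$ is an arc of the great circle in the plane spanned by $\omega_1,\omega_2$ (any two points if they are antipodal or equal). Expanding the Euclidean norm with $\|\omega_i\|=1$ gives
\begin{equation*}
\|\omega_1-\omega_2\|^2 = 2 - 2\cos\theta = 4\sin^2(\theta/2),
\end{equation*}
hence $\|\omega_1-\omega_2\| = 2\sin(\theta/2)$, since $\theta/2\in[0,\pi/2]$ makes the sine nonnegative.

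The claim therefore reduces to the scalar inequality
\begin{equation*}
\theta \le \pi \sin(\theta/2), \qquad \theta\in[0,\pi].
\end{equation*}
Setting $t=\theta/2\in[0,\pi/2]$, this becomes $\sin t \ge \frac{2}{\pi}t$, which is Jordan's inequality. I would prove it by concavity. The function $g(t)=\sin t$ is concave on $[0,\pi/2]$ because $g''=-\sin t\le 0$ there. A concave function lies above the chord joining its endpoint values $g(0)=0$ and $g(\pi/2)=1$, and that chord is exactly $\frac{2}{\pi}t$. Substituting back gives $\theta \le \frac{\pi}{2}\cdot 2\sin(\theta/2) = \frac{\pi}{2}\|\omega_1-\omega_2\|$, which is the statement.

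The only point needing care is the first step: identifying the geodesic distance with the angle $\arccos\langle\omega_1,\omega_2\rangle$. I would either quote this as the standard description of geodesics on the round sphere (great circles), or note that it follows from the fact that arc length along a great circle through the two points equals the angle they subtend. The rest is elementary. The constant $\pi/2$ is sharp: equality holds at $\theta=\pi$, that is, for antipodal points, as well as trivially at $\theta=0$.
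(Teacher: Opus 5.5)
Your proof is correct and follows the paper's route: both reduce the claim, via $d_{\mathbb{S}^{n-1}}(\omega_1,\omega_2)=\theta$ and $\|\omega_1-\omega_2\|=2\sin(\theta/2)$, to the scalar inequality $\theta\le \pi\sin(\theta/2)$ on $[0,\pi]$. The only difference is the last step: you use concavity of $\sin$ (Jordan's inequality), while the paper analyses the unique critical point of $f(\theta)=\pi\sin(\theta/2)-\theta$. Your version is slightly cleaner, since concavity makes nonnegativity immediate from $f(0)=f(\pi)=0$ without checking the sign of $f$ at the critical point.
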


\begin{proof}
Let $\theta\in[0,\pi]$ be the angle between $\omega_1$ and $\omega_2$. Then
\[
d_{\mathbb{S}^{n-1}}(\omega_1,\omega_2)=\theta,
\]
while the Euclidean distance between $\omega_1$ and $\omega_2$ is given by
\[
\|\omega_1-\omega_2\|
=
2\sin\left(\frac{\theta}{2}\right).
\]
Now in order to prove
\[
d_{\mathbb{S}^{n-1}}(\omega_1,\omega_2)
\leq
\frac{\pi}{2}\|\omega_1-\omega_2\|,
\]
it suffices to show that
\[
\theta
\leq
\pi \sin\left(\frac{\theta}{2}\right),
\qquad \theta\in[0,\pi].
\]
Define
\[
f(\theta)
=
\pi \sin\left(\frac{\theta}{2}\right)-\theta,
\qquad \theta\in[0,\pi].
\]
Then
$f(0)=f(\pi)=0.$
Further,
\[
f'(\theta)
=
\frac{\pi}{2}\cos\left(\frac{\theta}{2}\right)-1.
\]
The critical point of $f$ satisfies
\[
f'(\theta)=\frac{\pi}{2}\cos\left(\frac{\theta}{2}\right)-1=0.
\]
Therefore,
\[
\theta
=
2\cos^{-1}\left(\frac{2}{\pi}\right).
\]
At this point, $f(\theta)>0$. Since $f$ vanishes at the endpoints and attains a positive maximum at its unique critical point, it follows that
\[
f(\theta)\geq 0
\qquad \text{for all } \theta\in[0,\pi].
\]
Hence,
\[
d_{\mathbb{S}^{n-1}}(\omega_1,\omega_2)
\leq
\frac{\pi}{2}\|\omega_1-\omega_2\|.
\]

\end{proof}

\subsection{Some properties of star-shaped domains}
Let $\Omega\subset \mathbb{R}^n$ be a bounded star-shaped domain with respect to the origin $\textbf{0}$. Define a function $ r_{\Omega} : \mathbb{S}^{n-1} \rightarrow \mathbb{R}^{+}$ as follows
\begin{align}\label{radial function}
    r_{\Omega}(\omega)=\sup \{t>0| t \omega \in \Omega\}, \qquad \text{for} \quad \omega \in \mathbb{S}^{n-1}.
\end{align}
Then the domain $\Omega$ and its boundary $\partial \Omega$ can be written as 
\begin{align}
       \Omega=\{t \, \omega| \, 0\leq t < r_{\Omega}(\omega), \omega\in \mathbb{S}^{n-1}\},\quad \text{and}\quad  \partial\Omega=\{r_{\Omega}(\omega) \,\omega| \, \omega\in \mathbb{S}^{n-1}\}.
   \end{align}
The following lemma concerns the regularity of the radial function $r_\Omega(\omega).$

\begin{lemma} \label{distance function}
Let $\Omega \subset \mathbb{R}^n$ be a bounded strictly star-shaped domain with respect to the origin $\textbf{0}$ having $C^{1}$ boundary. Then the function $r_{\Omega}(\omega)$ is $C^{1}$ on $\mathbb{S}^{n-1}.$
\end{lemma}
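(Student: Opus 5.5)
The plan is to apply the Implicit Function Theorem recalled above, with $\tilde F$ a local defining function of $\partial\Omega$ composed with the ray map. Fix $\omega_0\in\mathbb{S}^{n-1}$ and set $x_0=r_\Omega(\omega_0)\,\omega_0\in\partial\Omega$. Since $\partial\Omega$ is $C^1$, there is a neighbourhood $W$ of $x_0$ and a $C^1$ function $\rho:W\to\mathbb{R}$ with $\Omega\cap W=\{\rho<0\}$, $\partial\Omega\cap W=\{\rho=0\}$ and $\nabla\rho\neq 0$ on $W$, so that $\nu=\nabla\rho/\|\nabla\rho\|$ on $\partial\Omega\cap W$. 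Define
\[
\tilde F(\omega,t)=\rho(t\,\omega)
\]
for $\omega$ near $\omega_0$ and $t$ near $t_0=r_\Omega(\omega_0)$. To work in Euclidean coordinates, extend $\omega$ to a neighbourhood of $\omega_0$ in $\mathbb{R}^n$, or use a local chart of $\mathbb{S}^{n-1}$. Then $\tilde F(\omega_0,t_0)=0$, and
\[
\frac{\partial\tilde F}{\partial t}(\omega_0,t_0)=\langle\nabla\rho(x_0),\omega_0\rangle=\frac{\|\nabla\rho(x_0)\|}{t_0}\,\langle x_0,\nu(x_0)\rangle>0
\]
by strict star-shapedness. (Here $t_0>0$ because $\textbf{0}\in\Omega$ is interior.) The Implicit Function Theorem therefore gives a $C^1$ function $f$ defined on a neighbourhood $U$ of $\omega_0$ with $f(\omega_0)=t_0$ and $\rho(f(\omega)\,\omega)=0$, i.e. $f(\omega)\,\omega\in\partial\Omega$.

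The main step is to show that $f=r_\Omega$ near $\omega_0$, not merely that $f(\omega)\omega$ is some boundary point on the ray. I would argue this as follows.

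First, $r_\Omega$ is continuous, by a lower and an upper bound near $\omega_0$. For the lower bound, $\Omega$ open and star-shaped implies $\{t\omega: t\le t_0-\varepsilon\}$ has a neighbourhood in $\Omega$, so $\liminf r_\Omega\ge t_0$. For the upper bound, $\partial_t\tilde F>0$ near $(\omega_0,t_0)$ gives $\rho(t\omega)>0$ for $t$ slightly above $f(\omega)$. Points just beyond the boundary along the ray are thus outside $\Omega$, and by star-shapedness (if $t\omega\in\Omega$ then $s\omega\in\Omega$ for $s<t$) all points further out are outside as well. This gives $r_\Omega(\omega)\le f(\omega)+\delta$.

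Second, for $\omega\in U$ the function $t\mapsto\tilde F(\omega,t)$ is strictly increasing on $(t_0-\eta,t_0+\eta)$. Hence it has exactly one zero there, which is $f(\omega)$. It is negative before that zero, so those points lie in $\Omega$, and positive after it, so those points lie outside $\Omega$. Combined with continuity of $r_\Omega$, which keeps $r_\Omega(\omega)$ inside this window for $\omega$ close to $\omega_0$, this forces $r_\Omega(\omega)=f(\omega)$.

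So $r_\Omega$ is $C^1$ in a neighbourhood of every point of $\mathbb{S}^{n-1}$, and therefore $C^1$ on $\mathbb{S}^{n-1}$. Restricting from an ambient neighbourhood to the sphere preserves $C^1$ regularity.

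The main obstacle is this identification step: making sure the ray from the origin meets $\partial\Omega$ only once near $x_0$, and that $r_\Omega$ does not jump to a farther boundary point. Strict star-shapedness combined with the monotonicity of $\tilde F$ in $t$ handles it locally. The global statement "star-shaped implies every ray from $\textbf{0}$ meets $\partial\Omega$ exactly once", which is implicit in the representation $\partial\Omega=\{r_\Omega(\omega)\omega\}$, also follows: every boundary point on a ray is a transversal crossing from $\Omega$ to the complement, so two such crossings cannot occur consecutively without an inward crossing in between.
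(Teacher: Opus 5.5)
Your proof is correct and follows the same route as the paper: you apply the Implicit Function Theorem to $\varphi(t\omega)$, and strict star-shapedness gives $\partial_t \varphi(t\omega)=\langle\nabla\varphi,\omega\rangle>0$ at $t=r_\Omega(\omega_0)$. The one difference is in identifying the implicit solution $f$ with $r_\Omega$: the paper simply asserts that each ray meets $\partial\Omega$ exactly once, while your monotonicity-in-$t$ argument and your sketch of why strict star-shapedness forces a single crossing supply the justification the paper omits.
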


\begin{proof}
Since $\Omega$ is a star-shaped domain with respect to the origin $\textbf{0}$, for each $\omega_1 \in \mathbb{S}^{n-1},$ the ray
\begin{align*}
    \tilde{r}_{\omega_1}(t) =  t\omega_1, \qquad t > 0,
\end{align*}
intersects $\partial \Omega$ at exactly one point, namely at the boundary point 
$ x(\omega_1) =  r_{\Omega}(\omega_1)\,\omega_1.$
Since $\partial \Omega$ is $C^{1}$, for every $x(\omega_1) \in \partial \Omega,$ there exists an open set 
$U\subset \mathbb{R}^n$ containing $x(\omega_1)$ and a real-valued function $\varphi \in C^{1}(U)$ such that
\begin{align*}
    U \cap \partial \Omega = \{\, y \in U | \varphi(y) = 0 \,\}, \qquad 
    \nabla \varphi \neq 0 \quad \text{on } U \cap \partial \Omega.
\end{align*}
Define a function $G : V \times (r_\Omega(\omega_1)-\varepsilon, r_\Omega(\omega_1)+\varepsilon) \to \mathbb{R}$ by
\begin{align*}
    G(\omega, t) = \varphi( t\omega),
\end{align*}
where $V$ is a small neighbourhood of $\omega_1\in \mathbb{S}^{n-1}$ such that $ V \times (r_\Omega(\omega_1)-\varepsilon, r_\Omega(\omega_1)+\varepsilon) \subset U.$
Since $\varphi$ is $C^{1}$, it follows that $G$ is also $C^{1}$, and by construction,
\begin{align*}
    G(\omega_1, r_{\Omega}(\omega_1)) =\varphi(r_\Omega(\omega_1)\omega_1)=0.
\end{align*}
Next, we compute the partial derivative of $G$ with respect to $t$:
\begin{align*}
    \frac{\partial G}{\partial t}(\omega_1, t)
        =\langle \nabla \varphi( t\omega_1),  \omega_1\rangle.
\end{align*}
At the boundary point $x(\omega_1)$, the vector $\nabla \varphi(x(\omega))$ is normal to 
 $\partial \Omega$. Since $\Omega$ is {strictly} star-shaped domain, we have
\begin{align*}
    \langle\nabla \varphi(r_{\Omega}(\omega_1)\omega_1), \omega_1\rangle > 0,
\end{align*}
and therefore
\begin{align*}
    \frac{\partial G}{\partial t}(\omega_1, r_{\Omega}(\omega_1)) \neq 0.
\end{align*}

Thus, by the Implicit Function Theorem, there exists a neighborhood $U_1$ of $\omega_1$ and a $C^1$ function $f$ such that 
 $r_{\Omega}(\omega_1) = f(\omega_1)$ and $G(\omega, f(\omega))=0$ for all $\omega \in U_1$. Hence $\varphi (f(\omega)\omega)=0 $ for all $\omega \in U_1.$ This implies that $f(\omega)\omega \in \partial \Omega$ for all $\omega \in U_1.$ Therefore $f(\omega)=r_\Omega(\omega)$ for all $\omega \in U_1.$
 
\end{proof}

\subsection{Bi-Lipschitz}

Let $W_1, W_2 \subset \mathbb{R}^n$ be bounded domains. A mapping 
$F : W_1 \to W_2$ is called a bi-Lipschitz function if
$F$ is bijective, and both $F$ and $F^{-1}$ are Lipschitz continuous. Therefore, there exist constants $L_1, L_2 > 0$ such that
\begin{align}\label{eq:bilip}
    &\|F(x)-F(y)\| \le L_1\|x-y\|, 
    \qquad \text{for all } x,y \in W_1,\\
    \text{and}\,\,&  \|F^{-1}(\tilde{x})-F^{-1}(\tilde{y})\| \le L_2\|\tilde{x}-\tilde{y}\|, 
    \qquad \text{for all } \tilde{x},\tilde{y} \in W_2.
\end{align}
 Next, we present essential properties of bi-Lipschitz functions which will be useful in the proof of our main theorem.
\begin{lemma}\label{derivative bound}
Let $M_1, M_2 \subset \mathbb{R}^n$ be Riemannian manifolds endowed with the metrics induced from $\mathbb{R}^n$. Let $f: M_1 \to M_2$ be a $C^1$ Lipschitz map with Lipschitz constant $L$. Then
\begin{align*}
    \|Df_p\|_{\mathrm{op}} \leq L, 
\qquad \text{for all } p \in M_1.
\end{align*}
\end{lemma}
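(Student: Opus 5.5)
The claim to prove is Lemma~\ref{derivative bound}: for a $C^1$ map $f:M_1\to M_2$ between submanifolds of $\mathbb{R}^n$ with induced metrics, a Lipschitz constant $L$ forces $\|Df_p\|_{\mathrm{op}}\le L$. I would argue directly from the definition of the differential along curves. Fix $p\in M_1$ and a unit tangent vector $v\in T_pM_1$. Since $M_1$ is an embedded $C^1$ submanifold, there is a $C^1$ curve $\gamma:(-\varepsilon,\varepsilon)\to M_1$ with $\gamma(0)=p$ and $\gamma'(0)=v$; for instance, take $\gamma(t)=\psi(\psi^{-1}(p)+t\,D\psi^{-1}_p v)$ for a local parametrization $\psi$. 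Then $f\circ\gamma$ is a $C^1$ curve in $M_2\subset\mathbb{R}^n$, and by the chain rule $Df_p v=(f\circ\gamma)'(0)$, computed in the ambient space.

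The key estimate is a comparison of difference quotients. For $t\neq 0$,
\[
\Big\|\frac{f(\gamma(t))-f(\gamma(0))}{t}\Big\|\le L\,\Big\|\frac{\gamma(t)-\gamma(0)}{t}\Big\|.
\]
Letting $t\to 0$ and using the differentiability of both curves, the left side tends to $\|Df_pv\|$ and the right side to $L\|v\|=L$. Taking the supremum over unit $v\in T_pM_1$ gives $\|Df_p\|_{\mathrm{op}}\le L$, where the operator norm uses the norms induced from $\mathbb{R}^n$ on $T_pM_1$ and $T_{f(p)}M_2$.

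The only point needing care is which distance the Lipschitz hypothesis refers to: ambient Euclidean distance or intrinsic geodesic distance. Lemma~\ref{geodesic distance} suggests the paper may use geodesic distance on $\mathbb{S}^{n-1}$. If the hypothesis is in terms of intrinsic distance $d_{M_i}$, I would use that $\|f(x)-f(y)\|\le d_{M_2}(f(x),f(y))\le L\,d_{M_1}(x,y)$ and that $d_{M_1}(\gamma(t),p)\le\int_0^{|t|}\|\gamma'\|=|t|(1+o(1))$. Dividing by $|t|$ and letting $t\to0$ gives the same bound. In either reading the limit argument goes through, so I expect no real obstacle beyond stating this convention clearly.
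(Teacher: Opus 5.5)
Your argument is correct. Comparing difference quotients along a $C^1$ curve $\gamma$ in $M_1$ with $\gamma(0)=p$ and $\gamma'(0)=v$ gives $\|Df_p v\|\le L$ under the Euclidean reading of the Lipschitz hypothesis. Under the intrinsic reading, the chord-versus-length inequality together with $d_{M_1}(\gamma(t),p)\le |t|(1+o(1))$ handles it.

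The paper gives no proof of this lemma; it only cites Gromov's monograph. Your proposal is therefore a self-contained version of the standard fact being quoted, not a different route.

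It is worth noting that the paper uses the lemma with the Euclidean constant $L_F$ from \eqref{Lipschitz constant}, so your first reading is the one actually needed. The paper applies it both on the open set $\Omega_2$ and on the boundary hypersurface, where it yields the bound on $|J_{\partial}(F^{-1})|$. Your treatment of general embedded submanifolds therefore covers both applications.
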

For a proof of this Lemma, see \cite[p.~1-2,8]{gromov2007metric}.

 The following lemmas provide the existence of a bi-Lipschitz diffeomorphism between two bounded strictly star-shaped domains.
   \begin{lemma}\label{Lipschz constant}
Let $\Omega_1, \Omega_2 \subset \mathbb{R}^n$ be {strictly} star-shaped bounded domains centered at the origin, having $C^{1}$ boundaries. Define 
\begin{align*}
    A : \mathbb{S}^{n-1} \to \mathbb{R}, \qquad 
A(\omega) = \frac{r_{\Omega_1}(\omega)}{r_{\Omega_2}(\omega)},
\end{align*}
where $r_{\Omega_1}(\omega)$ and $r_{\Omega_2}(\omega)$ are defined in \eqref{radial function}.
Then, $A$ is Lipschitz continuous function on $\mathbb{S}^{n-1}$ with Lipschitz constant $L_A$, and
\begin{align*}
    L_A \leq \tilde{L}= \bigl( \tan \alpha_1 + \tan \alpha_2\bigr)\,\sup_{\omega \in \mathbb{S}^{n-1}} A(\omega),
\end{align*}
where $\alpha_1$ and $\alpha_2$ denote the maximal angles between the outward unit normal and the radial direction on 
$\partial \Omega_1$ and $\partial \Omega_2$, respectively.
\end{lemma}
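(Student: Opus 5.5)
We bound the logarithmic derivative of the radial functions and then use the fact that the gradient controls the Lipschitz constant on the sphere. By Lemma~\ref{distance function}, both $r_{\Omega_1}$ and $r_{\Omega_2}$ are $C^1$ on $\mathbb{S}^{n-1}$ and positive. Hence $A$ is $C^1$, and
\begin{align*}
\nabla_{\mathbb{S}^{n-1}} A = A\,\bigl(\nabla_{\mathbb{S}^{n-1}}\log r_{\Omega_1} - \nabla_{\mathbb{S}^{n-1}}\log r_{\Omega_2}\bigr).
\end{align*}
The plan is to prove the pointwise bound $\|\nabla_{\mathbb{S}^{n-1}} \log r_{\Omega_i}(\omega)\| = \tan\beta_i(\omega)$, where $\beta_i(\omega)$ is the angle between $\nu(x)$ and $\omega$ at $x=r_{\Omega_i}(\omega)\omega$. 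It then follows that $\|\nabla_{\mathbb{S}^{n-1}} A\|\le (\tan\alpha_1+\tan\alpha_2)\sup A$.

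\textbf{Step 1: the gradient of $\log r_\Omega$.} Parametrize $\partial\Omega$ by $\Phi(\omega)=r(\omega)\omega$. For a unit tangent vector $v\in T_\omega\mathbb{S}^{n-1}$ we have $D\Phi(v)=(D_v r)\,\omega + r v$. This vector is tangent to $\partial\Omega$, so it is orthogonal to $\nu$:
\begin{align*}
(D_v r)\langle \omega,\nu\rangle + r\langle v,\nu\rangle=0.
\end{align*}
Therefore $D_v \log r = -\langle v,\nu\rangle/\langle\omega,\nu\rangle$. Strict star-shapedness gives $\langle\omega,\nu\rangle=\cos\beta>0$. Maximizing over unit $v\perp\omega$, the numerator is at most the norm of the tangential part of $\nu$, namely $\sin\beta$. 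Hence $|D_v\log r|\le\tan\beta\le\tan\alpha$. Here $\alpha<\pi/2$ is attained by compactness of $\partial\Omega$ and continuity of $\nu$, again using strictness.

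\textbf{Step 2: from the gradient bound to a Lipschitz bound.} Take $\omega_1,\omega_2\in\mathbb{S}^{n-1}$ and integrate along a minimizing great-circle arc $\gamma$. This gives
\begin{align*}
|A(\omega_1)-A(\omega_2)|\le \sup\|\nabla_{\mathbb{S}^{n-1}} A\|\,d_{\mathbb{S}^{n-1}}(\omega_1,\omega_2)\le \tilde L\, d_{\mathbb{S}^{n-1}}(\omega_1,\omega_2).
\end{align*}
This is Lipschitz continuity with respect to the intrinsic metric. If the Euclidean (induced) distance is intended, Lemma~\ref{geodesic distance} converts the estimate at the cost of a factor $\pi/2$.

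\textbf{Main obstacle.} The delicate point is matching the constant exactly as stated, without the extra $\pi/2$. The statement should be read with the geodesic metric on $\mathbb{S}^{n-1}$, since the gradient bound naturally yields the Lipschitz constant for $d_{\mathbb{S}^{n-1}}$. The only genuinely geometric input is the identity in Step 1, and it requires $\langle\omega,\nu\rangle>0$ uniformly; that is exactly where strict star-shapedness and the $C^1$ regularity of the boundary are used.
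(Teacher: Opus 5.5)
Your proposal is correct and follows essentially the same route as the paper. Both arguments use logarithmic differentiation of $A$, the tangency identity $\langle (D_v r)\,\omega + r v, \nu\rangle = 0$ giving $\|\nabla_{\mathbb{S}^{n-1}} r_{\Omega_i}\| \le r_{\Omega_i}\tan\alpha_i$, and integration of the gradient bound along geodesics. Your reading of $L_A$ as the Lipschitz constant with respect to $d_{\mathbb{S}^{n-1}}$ matches how the paper uses it in the proof of Lemma~\ref{existence_of_bi_lipschitz}, where the factor $\pi/2$ from Lemma~\ref{geodesic distance} is inserted separately.
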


\begin{proof}
Since $r_{\Omega_1}$ and $r_{\Omega_2}$ are bounded positive Lipschitz continuous functions on $\mathbb{S}^{n-1}$, it follows that $A$ is also bounded Lipschitz function. Let $L_A$ be the Lipschitz constant of $A.$\\
We now estimate the constant $L_A$. Writing
$A(\omega) = \frac{r_{\Omega_1}(\omega)}{r_{\Omega_2}(\omega)}$
and taking logarithms, we get
\begin{align*}
   \log A(\omega) = \log r_{\Omega_1}(\omega) - \log r_{\Omega_2}(\omega). 
\end{align*}
Differentiating tangentially on $\mathbb{S}^{n-1}$ gives
\begin{equation}\label{GradA}
\|\nabla_{\mathbb{S}^{n-1}} A(\omega)\|
\leq A(\omega)\left(
\frac{\|\nabla_{\mathbb{S}^{n-1}} r_{\Omega_1}(\omega)\|}{r_{\Omega_1}(\omega)}
+
\frac{\|\nabla_{\mathbb{S}^{n-1}} r_{\Omega_2}(\omega)\|}{r_{\Omega_2}(\omega)}
\right).
\end{equation}
Next we estimate $\|\nabla_{\mathbb{S}^{n-1}} r_{\Omega_1}(\omega)\|$ and $\|\nabla_{\mathbb{S}^{n-1}} r_{\Omega_2}(\omega)\|$. \\ 
Let $\tau \in T_\omega \mathbb{S}^{n-1}$ and choose a smooth curve $\alpha(s) \subset \mathbb{S}^{n-1}$ such that $\alpha(0)=\omega$ and $\alpha'(0)=\tau$. Define
\begin{align*}
    \gamma(s) = r_{\Omega_1}(\alpha(s))\,\alpha(s) \in \partial \Omega_1.
\end{align*}
Then
\begin{align*}
\gamma'(0)
= \left.\frac{d}{ds}\big(r_{\Omega_1}(\alpha(s))\,\alpha(s)\big)\right|_{s=0} 
= \langle \nabla_{\mathbb{S}^{n-1}} r_{\Omega_1}(\omega), \tau \rangle \,\omega
+ r_{\Omega_1}(\omega)\,\tau.
\end{align*}
Since $\gamma'(0)$ is tangent to $\partial \Omega_1$, we have
\[
\langle \gamma'(0), \nu^1(\omega)\rangle = 0,
\]
where $\nu^1(\omega)$ is the outward unit normal to the boundary $\partial \Omega_1$ at point $x(\omega)=r_{\Omega_1}(\omega)\omega$. Hence,
\[
\langle \nabla_{\mathbb{S}^{n-1}} r_{\Omega_1}(\omega), \tau \rangle
= - r_{\Omega_1}(\omega)\,
\frac{\langle \tau, \nu^1(\omega)\rangle}{\langle \omega, \nu^1(\omega)\rangle}.
\]
We decompose the normal vector as
\[
\nu^1(\omega) = \langle \nu^1(\omega), \omega\rangle \omega + \nu^1_s(\omega),
\]
where $\nu^1_s(\omega)=\nu^1(\omega)-\langle \nu^1(\omega), \omega\rangle \omega.$ Since $\langle \tau, \omega \rangle=0,$ it follows that
\[
\langle \nabla_{\mathbb{S}^{n-1}} r_{\Omega_1}(\omega), \tau \rangle
= - r_{\Omega_1}(\omega)\,
\frac{\langle \tau, \nu^1_s(\omega)\rangle}{\langle \omega, \nu^1(\omega)\rangle}.
\]
Let $\{\tau_i\}_{i=1}^{n-1}$ be an orthonormal basis of $T_\omega \mathbb{S}^{n-1}$. Then
\begin{align*}
\|\nabla_{\mathbb{S}^{n-1}} r_{\Omega_1}(\omega)\|^2
&= \sum_{i=1}^{n-1} 
\left|\langle \nabla_{\mathbb{S}^{n-1}} r_{\Omega_1}(\omega), \tau_i \rangle\right|^2 \\
&\leq \left(\frac{r_{\Omega_1}(\omega)}{\langle \omega, \nu^1(\omega)\rangle}\right)^2
\|\nu^1_s(\omega)\|^2.
\end{align*}
Hence,
\[
\|\nabla_{\mathbb{S}^{n-1}} r_{\Omega_1}(\omega)\|
\leq \frac{r_{\Omega_1}(\omega)}{\langle \omega, \nu^1(\omega)\rangle}
\|\nu^1_s(\omega)\|.
\]
Let $\psi_{\Omega_1}(\omega)$ be the angle between $\omega$ and $\nu^1(\omega)$. Then
\[
\cos \psi_{\Omega_1}(\omega) = \langle \omega, \nu^1(\omega)\rangle,
\qquad
\sin \psi_{\Omega_1}(\omega) = \|\nu^1_s(\omega)\|,
\]
and therefore
\begin{equation}\label{tan1}
\|\nabla_{\mathbb{S}^{n-1}} r_{\Omega_1}(\omega)\|
\leq r_{\Omega_1}(\omega)\tan \psi_{\Omega_1}(\omega).
\end{equation}
Similarly, for the domain $\Omega_2$, let $\nu^2(\omega)$ outward unit normal to the boundary $\partial \Omega_2$ at $y(\omega)=r_{\Omega_2}(\omega)\omega$, we have
\begin{equation}\label{tan2}
\|\nabla_{\mathbb{S}^{n-1}} r_{\Omega_2}(\omega)\|
\leq r_{\Omega_2}(\omega)\tan \psi_{\Omega_2}(\omega),
\end{equation}
where $\psi_{\Omega_2}(\omega)$ denotes the angle between $\omega$ and $\nu^2(\omega)$.\\
Since $\Omega_1 $ and $ \Omega_2$ are strictly star-shaped domains with respect to the origin $\textbf{0}$,  we have $\cos \psi_{\Omega_1}(\omega)> 0, \cos \psi_{\Omega_2}(\omega)> 0$ and therefore $0\leq\psi_{\Omega_1}(\omega), \psi_{\Omega_2}(\omega) < \frac{\pi}{2}$ for all $\omega \in \mathbb{S}^{n-1}$.  By the compactness assumption on $\partial \Omega_1$ and $\partial \Omega_2,$ there exist $\alpha_1, \alpha_2$ such that 
  \begin{align*}
      0\leq \psi_{\Omega_1}(\omega)\leq \alpha_1 <\frac{\pi}{2} \quad \text{for all } \quad \omega\in \mathbb{S}^{n-1}, \\
      0\leq \psi_{\Omega_2}(\omega)\leq \alpha_2 <\frac{\pi}{2} \quad \text{for all }\quad \omega \in \mathbb{S}^{n-1}.
  \end{align*}
Combining \eqref{GradA}, \eqref{tan1}, and \eqref{tan2}, we obtain
\begin{align*}
    \|\nabla_{\mathbb{S}^{n-1}} A(\omega)\|
\leq A(\omega)\big(\tan \alpha_1 + \tan \alpha_2\big).
\end{align*}
Taking the supremum over $\mathbb{S}^{n-1}$ yields
\begin{align*}
    \sup_{\omega \in \mathbb{S}^{n-1}} \|\nabla_{\mathbb{S}^{n-1}}A(\omega)\| \leq \big(\tan \alpha_1 + \tan \alpha_2\big)\,
\sup_{\omega \in \mathbb{S}^{n-1}} A(\omega).
\end{align*}
Since $\mathbb{S}^{n-1}$ is geodesically connected, the Lipschitz constant $L_A$ of the function $A$ satisfies
\begin{align*}
    L_A =\sup_{\omega \in \mathbb{S}^{n-1}} \|\nabla_{\mathbb{S}^{n-1}}A(\omega)\|\leq \tilde{L}=\big(\tan \alpha_1 + \tan \alpha_2\big)\,
\sup_{\omega \in \mathbb{S}^{n-1}} A(\omega).
\end{align*}
\end{proof}

\begin{lemma}\label{existence_of_bi_lipschitz}
Let $\Omega_1$ and $\Omega_2$ be bounded strictly star-shaped domains in $\mathbb{R}^n$ with respect to the origin $\textbf{0}$, having $C^{1}$-smooth boundary such that $\Omega_1 \subset \Omega_2$. Then there exists a bi-Lipschitz diffeomorphism between $\Omega_1$ and $\Omega_2$.
\end{lemma}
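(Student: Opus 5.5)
We will use the radial map. Define $F:\Omega_2\to\Omega_1$ by
\[
F(0)=0,\qquad F(t\omega)=A(\omega)\,t\,\omega=\frac{r_{\Omega_1}(\omega)}{r_{\Omega_2}(\omega)}\,t\,\omega,\qquad 0\le t<r_{\Omega_2}(\omega),\ \omega\in\mathbb{S}^{n-1},
\]
that is, $F(x)=A(x/\|x\|)\,x$ for $x\neq0$. By the representation of star-shaped domains via their radial functions, $F$ is a bijection of $\Omega_2$ onto $\Omega_1$ and maps $\partial\Omega_2$ onto $\partial\Omega_1$. Its inverse is $G(y)=A(y/\|y\|)^{-1}y$, which has the same form with $A$ replaced by $1/A=r_{\Omega_2}/r_{\Omega_1}$. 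By Lemma \ref{distance function} both radial functions are $C^1$ and positive on the compact sphere, so $A$ and $1/A$ are $C^1$ there and bounded above and below by positive constants. Hence $F$ and $G$ are $C^1$ on $\overline{\Omega}\setminus\{0\}$. Since the map is a cone, it is only Lipschitz at the origin; the proof will therefore produce a bi-Lipschitz homeomorphism that is a $C^1$ diffeomorphism away from $\mathbf 0$. This suffices for the change of variables in $H^1$ used later, and we will state the result in that sense.

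The core of the proof is a uniform Lipschitz estimate for $F$. Take $x=t\omega$ and $y=s\eta$ with $t\ge s$. Write
\[
F(x)-F(y)=A(\omega)(x-y)+s\bigl(A(\omega)-A(\eta)\bigr)\eta.
\]
The first term is at most $\sup A\,\|x-y\|$. For the second, Lemma \ref{Lipschz constant} gives $|A(\omega)-A(\eta)|\le L_A\,d_{\mathbb{S}^{n-1}}(\omega,\eta)$, and Lemma \ref{geodesic distance} gives $d_{\mathbb{S}^{n-1}}(\omega,\eta)\le\frac{\pi}{2}\|\omega-\eta\|$. It remains to use the elementary inequality $s\|\omega-\eta\|\le\|x-y\|$ for $s\le t$. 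Indeed,
\[
\|x-y\|^2=(t-s)^2+ts\|\omega-\eta\|^2\ge s^2\|\omega-\eta\|^2 .
\]
Combining these, we obtain
\[
\|F(x)-F(y)\|\le\Bigl(\sup A+\tfrac{\pi}{2}L_A\Bigr)\|x-y\| .
\]
If $t<s$, we apply the same splitting with the roles of $x$ and $y$ exchanged.

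The same argument applied to $G$ uses Lemma \ref{Lipschz constant} with $\Omega_1$ and $\Omega_2$ interchanged. This gives the Lipschitz constant of $1/A$ as at most $(\tan\alpha_1+\tan\alpha_2)\sup(1/A)$, and hence a constant $\sup(1/A)+\frac{\pi}{2}L_{1/A}$ for $G$. This yields explicit constants $L_1$ and $L_2$, and the strict star-shapedness enters exactly through the bounds $\alpha_i<\pi/2$.

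The only delicate point is regularity at the origin and the precise meaning of ``diffeomorphism''. Away from $\mathbf 0$, smoothness follows from the chain rule together with Lemma \ref{distance function}. At $\mathbf 0$ we use only that $F$ is Lipschitz, with $\|F(x)\|\le\sup A\,\|x\|$. Then $DF$ exists almost everywhere with $\|DF\|_{\mathrm{op}}\le L_1$ by Lemma \ref{derivative bound} applied on $\Omega_2\setminus\{0\}$, and similarly $\|DG\|_{\mathrm{op}}\le L_2$. If $A$ is constant the map is linear, so it is smooth everywhere, including at the origin.
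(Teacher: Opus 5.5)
Your argument follows the paper's route. You use the same radial map $F(t\omega)=A(\omega)t\omega$, the same splitting of $F(x)-F(y)$, and the same Lemmas \ref{Lipschz constant} and \ref{geodesic distance}, and you handle the inverse by interchanging the domains. The only difference in the Lipschitz estimate is in the elementary inequality. You put the smaller radius in front of $A(\omega)-A(\eta)$ and use $\|x-y\|^2=(t-s)^2+ts\|\omega-\eta\|^2$. The paper instead uses $t_2\|\omega_1-\omega_2\|\le 2\|x_1-x_2\|$, which costs a factor $2$. As a result you get $\sup A+\frac{\pi}{2}L_A$ where the paper gets $\sup A+\pi\tilde L$.

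Where you depart from the paper, you are right and the paper is not. The paper deduces that $F$ is $C^1$ on all of $\Omega_2$ from Lemma \ref{distance function}. But $F$ is positively homogeneous of degree one, so differentiability at $\mathbf{0}$ would force $F(x)=DF_{\mathbf{0}}x$. Since $F(x)$ is a positive multiple of $x$, every vector would then be an eigenvector of $DF_{\mathbf{0}}$, so $A$ would have to be constant.

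So both arguments really establish the same thing: a bi-Lipschitz homeomorphism that is a $C^1$ diffeomorphism from $\Omega_2\setminus\{\mathbf{0}\}$ onto $\Omega_1\setminus\{\mathbf{0}\}$. As you say, this suffices for the change of variables and the trace comparison in the inequality of Theorem \ref{thm:main}. The paper's equality argument, however, differentiates $F^{-1}$ at $\mathbf{0}$. With your version it must be run at a point $s\omega\neq\mathbf{0}$, where $DF^{-1}_{s\omega}\,\omega=B(\omega)\omega$ still yields $B\equiv L_2$.

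Be aware that by weakening the conclusion you have not proved the lemma as literally stated. The lemma is nevertheless true, and your estimate repairs it with one extra step; here is a sketch. Take $F(t\omega)=\rho(t,\omega)\omega$ with $\rho=t\bigl(1+\chi(t)(A(\omega)-1)\bigr)$. Here $\chi$ is smooth, vanishes on $[0,\delta_0]$, and equals $1$ on $[\delta_1,\infty)$, where $\delta_1<\min r_{\Omega_2}$. Choose $\chi$ to vary slowly in $\log t$, so that $t\chi'(t)\max|A-1|<\min A$. Since $A\le 1$, this gives $\partial_t\rho>0$.

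This $F$ has the required properties:
\begin{itemize}
\item it is the identity near $\mathbf{0}$ and agrees with the radial map near $\partial\Omega_2$;
\item it is a bijection along each ray;
\item its differential is triangular in the frame $\omega,\tau_1,\dots,\tau_{n-1}$ with diagonal $\partial_t\rho,\rho/t,\dots,\rho/t$, hence invertible everywhere.
\end{itemize}
For the Lipschitz bound, run your splitting with $A(\omega)$ replaced by $\lambda=\rho/t$, using that $t\partial_t\lambda$ and $\nabla_{\mathbb{S}^{n-1}}\lambda$ are bounded. The inverse has the same form $s\eta\mapsto\mu(s,\eta)s\eta$, and $\mu$, $s\partial_s\mu$ and $\nabla_{\mathbb{S}^{n-1}}\mu$ are bounded by implicit differentiation since $\partial_t\rho$ is bounded below. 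So the same computation applies to $F^{-1}$.
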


\begin{proof}
For each direction $\omega \in \mathbb{S}^{n-1}$, define the radial functions (see \eqref{radial function})
\[
r_{\Omega_i}(\omega) = \sup \{ t > 0 \mid t\omega \in \Omega_i \}, \quad i=1,2.
\]
Then, domains $\Omega_i$ can be represented as
\[
\Omega_i = \{ t\omega \mid 0 \leq t < r_{\Omega_i}(\omega) \}, \quad i=1,2.
\]
Define a map $F : \Omega_2 \to \Omega_1$ by
\[
F(t\omega) = \frac{r_{\Omega_1}(\omega)}{r_{\Omega_2}(\omega)}\, t\omega=A(\omega)t\omega,
\qquad 0\leq t < r_{\Omega_2}(\omega).
\]
It is straightforward to verify that $F$ is bijective, with inverse
\[
F^{-1}(t\omega) = \frac{r_{\Omega_2}(\omega)}{r_{\Omega_1}(\omega)}\, t\omega= B(\omega) t\omega,
\]
where $B(\omega)=\frac{r_{\Omega_2}(\omega)}{r_{\Omega_1}(\omega)}.$ 
We now prove that $F$ is Lipschitz continuous. Let $x_1 = t_1\omega_1$ and $x_2 = t_2\omega_2$ be two points in $\Omega_2$. 
Since $ \Omega_1$ and $\Omega_2$ are bounded strictly star-shaped domains centered at the origin with $C^{1}$ boundary, the functions $r_{\Omega_1(\omega)}$ and $r_{\Omega_2}(\omega)$ are positive, bounded, and Lipschitz continuous on $\mathbb{S}^{n-1}$. Hence $A(\omega)$ is bounded and Lipschitz continuous; by Lemma \ref{Lipschz constant}, there exist a constant $L_A>0$ such that
\begin{align*}
    |A(\omega_1)-A(\omega_2)| \leq L_A  d_{\mathbb{S}^{n-1}}(\omega_1, \omega_2).
\end{align*}
Then
\begin{align*}
\|F(x_1)-F(x_2)||
&= \|A(\omega_1)t_1\omega_1 - A(\omega_2)t_2\omega_2\| \\
&\leq \|A(\omega_1)(t_1\omega_1 - t_2\omega_2)\| 
    + \|(A(\omega_1)-A(\omega_2))t_2\omega_2\| \\
&= \|A(\omega_1)(t_1\omega_1 - t_2\omega_2)\|+ t_2|A(\omega_1)-A(\omega_2)|\\    
&\leq A(\omega_1) \|t_1\omega_1 - t_2\omega_2\| 
    +t_2 L_A\,  d_{\mathbb{S}^{n-1}}(\omega_1, \omega_2).
\end{align*}
 Using Lemma \ref{geodesic distance}, we obtain 
\begin{align} \label{F ineq}
    \|F(x_1)-F(x_2)||\leq A(\omega_1) \|t_1\omega_1 - t_2\omega_2\|+ L_A t_2 \frac{\pi}{2}\|\omega_1-\omega_2\|.
\end{align}
Next, observe that
\[
t_1\omega_1 - t_2\omega_2 
= (t_1 - t_2)\omega_1 + t_2(\omega_1 - \omega_2).
\]
Hence,
\[
t_2(\omega_1 - \omega_2) = (x_1 - x_2) - (t_1 - t_2)\omega_1,
\]
which implies
\[
t_2 \|\omega_1 - \omega_2\|
\leq \|x_1 - x_2\| + |t_1 - t_2|.
\]
Moreover, since $t_i = \|x_i\|$, we have
\[
|t_1 - t_2| \leq \|x_1 - x_2\|.
\]
Therefore,
\begin{align}\label{t ineq}
    t_2 \|\omega_1 - \omega_2\| \leq 2 \|x_1 - x_2\|.
\end{align}
Combining \ref{F ineq} and \ref{t ineq}, we obtain
\begin{align*}
\|F(x_1)-F(x_2)\|
&\leq A(\omega_1) \|x_1 - x_2\| + \pi L_A \|x_1 - x_2\| \\
&\leq\big( \{\sup_{\omega \in \mathbb{S}^{n-1}}A(\omega)\}+\pi L_A\big)\, \|x_1 - x_2\|.\\
&\leq \big( \{\sup_{\omega \in \mathbb{S}^{n-1}}A(\omega)\}+\pi \tilde{L}\big)\, \|x_1 - x_2\|.\\
&=L_1\|x_1 - x_2\|.
\end{align*}
Thus, $F$ is a Lipschitz continuous function and Lipschitz constant $L_F$ of the function $F$ satisfies 
\begin{align}\label{Lipschitz constant}
    L_F \leq L_1=\{\sup_{\omega \in \mathbb{S}^{n-1}}A(\omega)\}+\pi \tilde{L}.
\end{align}
Applying the same argument to $F^{-1}$ (with $B(\omega)$ in place of $A(\omega)$), we conclude that $F^{-1}$ is also Lipschitz continuous. The Lipschitz constant $L_{F^{-1}}$ of $F^{-1}$ satisfies
\begin{align}\label{inverse Lipschitz constant}
    L_{F^{-1}} \leq L_2 =\{\sup_{\omega \in \mathbb{S}^{n-1}}B(\omega)\}+\pi\bar{L},
\end{align} 
where $ \bar{L}=\bigl( \tan \alpha_1 + \tan \alpha_2\bigr)\,\sup_{\omega \in \mathbb{S}^{n-1}} B(\omega)$ and Lipschitz constant $L_B$ of $B$ satisfies $L_B\leq \bar{L}.$
 Hence, the function $F$ is bi-Lipschitz.

By Lemma~\ref{distance function}, the functions $r_{\Omega_1}(\omega)$ and $r_{\Omega_2}(\omega)$ are of class $C^{1}$ on $\mathbb{S}^{n-1}$. It follows that  the function $F $ is $ C^{1}$ on $\Omega_2$, and similarly,  $F^{-1}$ is $C^{1}$ on $\Omega_1$. Hence, function $F$ is a bi-Lipschitz diffeomorphism.
\end{proof}

\section{Main result} \label{main result}
We now proceed to the proof of Theorem~\ref{thm:main}, which is based on constructing a bi-Lipschitz transformation between the two domains and comparing the associated Rayleigh quotients.

\begin{proof} [Proof of Theorem \ref{thm:main}]
Since $\Omega_1$ and $\Omega_2$ are strictly star-shaped bounded domains in $\mathbb{R}^n$ with respect to the origin $\textbf{0}$, having $C^1$ smooth boundaries. Then by Lemma \ref{existence_of_bi_lipschitz}, there exists 
a bi-Lipschitz diffeomorphism 
\begin{align*}
    F:\Omega_2 \to \Omega_1,  \quad F(t\omega)= A(\omega)t\omega,
\end{align*}
with Lipschitz constants $L_F, L_{F^{-1}}>0$, such that 
\begin{align*}
    &\|F(x)-F(y)\| \le L_F\|x-y\|, 
    \qquad \text{for all } x,y \in \Omega_2,\\
    \text{and}\,\,&  \|F^{-1}(\tilde{x})-F^{-1}(\tilde{y})\| \le L_{F^{-1}}\|\tilde{x}-\tilde{y}\|, 
    \qquad \text{for all } \tilde{x},\tilde{y} \in \Omega_1.
\end{align*}
By Lemma~\ref{derivative bound}, we obtain
\begin{align*}
    \|DF_p\|_{op} \leq L_F \quad \text{for all} \quad p\in \Omega_2, 
\qquad 
\|DF_q^{-1}\|_{op} \leq L_{F^{-1}} \quad \text{for all}\quad q=F(p)\in \Omega_1.  
\end{align*}
Consequently, for $p\in \Omega_2$ and $q=F(p)\in \Omega_1$, the standard Jacobian bound
\begin{align}\label{Jacobian bound}
   |\det DF_q^{-1}| \leq \|DF_q^{-1}\|_{op}^n \leq L_{F^{-1}}^n, 
\end{align}
and
\begin{align*}
    |\det DF_q^{-1}| 
= \frac{1}{|\det DF_p|}
\geq \frac{1}{\|DF_p\|_{op}^n}
\geq \frac{1}{L_F^n}.
\end{align*}
Let $u\in H^1(\Omega_1)$ and define $v := u\circ F \in H^1(\Omega_2)$.  
By the chain rule,
\begin{align*}
  \nabla v(x) = \nabla u(F(x)) \, DF_x,  
\end{align*}
then  we get
\begin{align*}
    \|\nabla v(x)\|^2 \le \| D F_x \|_{op}^2\, \|\nabla u(F(x))\|^2 
\le L_F^2\, \|\nabla u(F(x))\|^2.
\end{align*}
Applying the change of variables $y=F(x)$  and \eqref{Jacobian bound} yields
\begin{align}
\int_{\Omega_2} \|\nabla v(x)\|^2\,dV_x
&\le 
L_F^2 \int_{\Omega_2} \|\nabla u(F(x))\|^2\,dV_x     \notag\\
&=
L_F^2 \int_{\Omega_1} \|\nabla u(y)\|^2
\, |\det D F_y^{-1}| \, dV_y \notag \\
&\le
L_F^2L_{F^{-1}}^{n} \int_{\Omega_1} \|\nabla u\|^2 \, dV_y. \label{eq:numerator}
\end{align}
Under the change of variable $y=F(x),$ the surface element transform as   
   $dS_x = |J_{\partial}(F^{-1})(y)|\, dS_y,$
where
 $J_{\partial}(F^{-1})(y)$ denotes the Jacobian matrix of $F^{-1}$ restricted on the boundary. 
  Using Lemma \ref{derivative bound}, we get $|J_{\partial}(F^{-1})(y)| \geq L_F^{-(n-1)}$ for all $y\in \partial\Omega_1.$  
Hence, the boundary integral becomes
\begin{align}
\int_{\partial\Omega_2} v(x)^2\, dS_x
&= \int_{\partial\Omega_2} u(F(x))^2\, dS_x \notag\\
&= \int_{\partial\Omega_1} u(y)^2\, |J_{\partial}(F^{-1})(y)| \, dS_y \notag\\
&\ge L_F^{-(n-1)} \int_{\partial\Omega_1} u(y)^2 \, dS_y.
\label{eq:denominator}
\end{align}
Combining \eqref{eq:numerator} and \eqref{eq:denominator}, we obtain
\begin{align} \label{ratio inequality}
    \frac{\displaystyle\int_{\Omega_2} \|\nabla v\|^2\,dV_x}
{\displaystyle\int_{\partial\Omega_2} v^2\, dS_x}
\;\le\;
L_F^{\,n+1}L_{F^{-1}}^{\,n}\,
\frac{\displaystyle\int_{\Omega_1} \|\nabla u\|^2\,dV_y}
{\displaystyle\int_{\partial\Omega_1} u^2\, dS_y}.
\end{align}
Let $u_0,u_1,\dots,u_k$ be  orthogonal eigenfunctions corresponding to the Steklov eigenvalues $\sigma_0(\Omega_1),\sigma_1(\Omega_1), \dots,\sigma_k(\Omega_1)$, 
respectively. For $i=0,1,2,\dots,k,$ define
\begin{align*}
    v_i(x) = (u_i \circ F)(x), \qquad  x\in \Omega_2.
\end{align*}
Let
   $E = \mathrm{span}\{u_0,u_1,\dots,u_k\}$ and $\widetilde{E} = \mathrm{span}\{v_0,v_1,\dots,v_k\}.$ Then $E$ and $\widetilde{E}$ are $(k+1)-$ dimensional subspaces of $H^1(\Omega_1)$ and $H^1(\Omega_2)$ respectively.
For any $v \in \tilde{E}\setminus\{0\}$, there exist coefficients  
$a_0,a_1,\dots,a_k \in \mathbb{R}$, not all zero, such that
\[
v = a_0 v_0 + a_1 v_1 + \cdots + a_k v_k. 
\]
Then $u=v\circ F^{-1}$ will be of the form $$u=a_0u_0+a_1u_1+ \dots + a_k u_k.$$
Using the variational characterization of the Steklov eigenvalues and \eqref{ratio inequality}, we obtain
\begin{equation}\label{uk condision}
\begin{aligned}
\sigma_k(\Omega_2) 
&\le \max_{v\neq 0 \,\in\, \widetilde{E}}
   \frac{\displaystyle\int_{\Omega_2} \|\nabla v\|^2\,dV_x}
        {\displaystyle\int_{\partial\Omega_2} v^2\,dS_x} 
\le L_F^{\,n+1}L_{F^{-1}}^n
   \max_{u\neq 0 \,\in\, E}
   \frac{\displaystyle\int_{\Omega_1}\|\nabla u\|^2\,dV_y}
        {\displaystyle\int_{\partial\Omega_1}u^2\,dS_y} \\[0.4em]
&\leq
L_F^{\,n+1}L_{F^{-1}}^n
\max_{\substack{(a_0,a_1,\dots,a_k)\in \mathbb{R}^{k+1} \\
(a_0,a_1,\dots,a_k)\neq \mathbf{0}}}
\frac{
     a_0^2 \displaystyle\int_{\Omega_1} \|\nabla u_0\|^2 \,dV_y
   + \cdots
   + a_k^2 \displaystyle\int_{\Omega_1} \|\nabla u_k\|^2 \,dV_y
}{
     a_0^2 \displaystyle\int_{\partial\Omega_1} u_0^2 \,dS_y
   + \cdots
   + a_k^2 \displaystyle\int_{\partial\Omega_1} u_k^2 \,dS_y
} \\[0.4em]
&\le
L_F^{\,n+1}L_{F^{-1}}^n
\max\left\{
\frac{\displaystyle\int_{\Omega_1}\|\nabla u_0\|^2\,dV_y}
     {\displaystyle\int_{\partial\Omega_1}u_0^2\,dS_y},
\;
\dots,
\;
\frac{\displaystyle\int_{\Omega_1}\|\nabla u_k\|^2\,dV_y}
     {\displaystyle\int_{\partial\Omega_1}u_k^2\,dS_y}
\right\} \\[0.4em]
&=
L_F^{\,n+1}L_{F^{-1}}^n\,\sigma_k(\Omega_1).
\end{aligned}
\end{equation}
Hence, using \eqref{Lipschitz constant} and \eqref{inverse Lipschitz constant}, we obtain
\begin{align*} 
\sigma_k(\Omega_2) \le L_F^{\,n+1}L_{F^{-1}}^n\,\sigma_k(\Omega_1) \leq L_1^{n+1}L_2^n \,\sigma_k(\Omega_1)=C \,\sigma_k(\Omega_1).
\end{align*}
Here, constant $C=L_1^{n+1}L_2^n$ depends on the geometry of $\Omega_1$ and $\Omega_2.$\\
Further, if $\Omega_1= B(\textbf{0}, R_1)$ and $\Omega_2=B(\textbf{0}, R_2)$ are concentric balls of radii $R_1, R_2$, respectively, centered at the origin with $R_1 < R_2$. In this case, constants $L_F=L_1=\frac{R_1}{R_2}$ and $L_{F^{-1}}=L_2=\frac{R_2}{R_1}$. Then inequality \eqref{main inequality} becomes
\begin{align*}
  \sigma_k(B(\textbf{0}, R_2)) \le \frac{R_1}{R_2} \,\sigma_k(B(\textbf{0}, R_1)).  
\end{align*}
It is well known that for all $k\geq 0,$
\begin{align*}
     \sigma_k(B(\textbf{0}, R_2))= \frac{R_1}{R_2} \sigma_k(B(\textbf{0}, R_1)).
\end{align*}
Thus, equality holds in \eqref{main inequality} for $\Omega_1= B(\textbf{0}, R_1)$ and $\Omega_2=B(\textbf{0}, R_2)$.\\
Conversely, suppose that the equality holds in \eqref{main inequality}. Then,
\[
\sigma_k(\Omega_2) = C\,\sigma_k(\Omega_1), 
\quad \text{where } C = L_1^{\,n+1} L_2^{\,n}.
\]
It follows that equality also holds in \eqref{uk condision}, this gives  $u=u_k$ is an eigenfunction corresponding to $\sigma_k(\Omega_1)$ and let 
\[
v_k(x) = (u_k \circ F)(x).
\]
Then, the equality also holds in \eqref{ratio inequality}.
This together with \eqref{eq:numerator} and \eqref{eq:denominator} implies
\begin{align*}
    \int_{\Omega_2} \|\nabla v_k(x)\|^2\,dV_x 
    &= L_1^2 L_2^n \int_{\Omega_1} \|\nabla u_k(y)\|^2\,dV_y, \\
    \int_{\partial\Omega_2} v_k(x)^2\, dS_x 
    &= L_1^{-(n-1)} \int_{\partial\Omega_1} u_k(y)^2\, dS_y.
\end{align*}
Consequently,
\begin{align*}
    |\det DF_y^{-1}| = L_2^n \quad \text{for all} \quad y\in \Omega_1.
\end{align*}
Now let $\lambda_1, \lambda_2,\dots, \lambda_n$ are singular value of $DF_y^{-1}$ then, 
\begin{align*}
  \lambda_i\leq  \|DF_y^{-1}\|_{op}\leq L_2 \quad \text{for all} \quad  i=1,2,3, \dots, n.
\end{align*}
Since
\begin{align*}
    |\det DF_y^{-1}|=\prod_{i=1}^n \lambda_i = L_2^n,
\end{align*}
it follows that
\begin{align*}
    \lambda_1=\lambda_2=\cdots=\lambda_n=L_2.
\end{align*}
Using By singular value decomposition, for any $y\in \Omega_1, DF_y^{-1}$ can be written as 
\begin{align*}
    DF_y^{-1}=U(L_2I) V^T=L_2Q_y,
\end{align*}
where $U,V$ are orthogonal matrices and $Q_y = UV^T$ is an orthogonal matrix. 
Hence, for any $w_1,w_2 \in \mathbb{S}^{n-1}$,
\begin{align}\label{inner product equality}
\langle DF_y^{-1}w_1, DF_y^{-1}w_2\rangle = L_2^2 \langle w_1, w_2\rangle.
\end{align}
Since $\textbf{0}\in \Omega_1,$ we compute $DF_\textbf{0}^{-1}(\tilde{\omega}).$  
 Consider a curve $\tilde{\gamma}:(-\epsilon, \epsilon)\to \Omega_1 $ defined by $\tilde{\gamma}(s)=s\tilde{\omega}$, clearly, $\tilde{\gamma}(0)=\textbf{0}$ and $\tilde{\gamma}'(0)=\tilde{\omega}$. Then, 
\begin{align*}
    DF_\textbf{0}^{-1}(\tilde{\omega})= \frac{d}{ds}(F^{-1}(s\tilde{\omega}))\bigg|_{s=0}= \frac{d}{ds}(B(\tilde{\omega})s\tilde{\omega})\bigg|_{s=0}= B(\tilde{\omega})\tilde{\omega}.
\end{align*}
Thus, for any $\omega \in \mathbb{S}^{n-1}$
\begin{align*}
DF_\textbf{0}^{-1}(\omega)=B(\omega)\omega.
\end{align*}
Using \eqref{inner product equality} at $y=\textbf{0}$, we obtain for all $\omega_1,\omega_2 \in \mathbb{S}^{n-1}$,
 \begin{align*}
    L_2^2\langle \omega_1, \omega_2\rangle=\langle DF_\textbf{0}^{-1}\omega_1, DF_\textbf{0}^{-1}\omega_2\rangle
   = & \langle B(\omega_1)\omega_1,\,  B(\omega_2)\omega_2\rangle\\
    &=B(\omega_1) B(\omega_2) \langle\omega_1, \, \omega_2\rangle. 
\end{align*}
Taking $\omega_1=\omega_2$, we get 
\begin{align*}
  B(\omega_2)^2=  B(\omega_1)^2=L_2^2. 
\end{align*}
Since $B(\omega)>0$, it follows that 
\begin{align*}
    B(\omega)=L_2= constant, \quad \text{for all}\quad \omega \in \mathbb{S}^{n-1}.
\end{align*}
Since $L_2=\sup_{\omega\in \mathbb{S}^{n-1}}B(\omega)+\pi \bar{L},$ we get
\begin{align*}
    B(\omega) 
    &= \sup_{\omega \in \mathbb{S}^{n-1}} B(\omega) + \pi \bar{L} \\
    &= \sup_{\omega \in \mathbb{S}^{n-1}} B(\omega) 
       + \pi\big(\tan \alpha_1 + \tan \alpha_2\big)
         \sup_{\omega \in \mathbb{S}^{n-1}} B(\omega),
\end{align*}
which gives
\begin{align*}
    \tan \alpha_1 + \tan \alpha_2 = 0.
\end{align*}
Since $0 \leq \alpha_1, \alpha_2 < \frac{\pi}{2}$, we conclude that
\[
\tan \alpha_1 = 0, 
\qquad 
\tan \alpha_2 = 0,
\]
and hence
\[
\alpha_1 = 0, 
\qquad 
\alpha_2 = 0.
\]
Recall that, $\alpha_1$ and $\alpha_2$ represent the maximal angles between the radial direction and the outward normal on $\partial \Omega_1$ and $\partial \Omega_2$, respectively. Thus, at every boundary point, the radial direction coincides with the outward normal. Therefore, $\Omega_1$ and $\Omega_2$ must be concentric balls.

\end{proof}

For certain choices of $\Omega_1$ and $\Omega_2$, the Lipschitz constants $L_F$ and $L_{F^{-1}}$ can be computed easily, and inequality \eqref{uk condision} itself gives better bound.
 \begin{remark}
Let $\Omega_1, \Omega_2$ be bounded strictly star-shaped domains in $\mathbb{R}^n$ such that $\Omega_2 = m\,\Omega_1,$ for some $m > 1$. In this case, $L_F=\frac{1}{m}$ and $L_{F^{-1}}=m.$ Then inequality \eqref{uk condision} becomes 
\begin{align*}
    \sigma_k(\Omega_2)\leq \frac{1}{m} \sigma_k(\Omega_1), \quad k=0,1,2,\dots
\end{align*}
It is well known that 
\[
\sigma_k(\Omega_2) =\sigma_k(m\Omega_1)=\frac{1}{m}\,\sigma_k(\Omega_1), \quad k=0,1,2,\dots
\]
 Hence, 
we observe that equality holds in \eqref{uk condision}.
\end{remark}

\section{Bounds for Steklov eigenvalues} \label{bound}

  We recall the following classical result of \cite{MR30135}, which provides a sharp geometric comparison between a bounded convex domain and its maximal-volume inscribed ellipsoid.
\begin{theorem}[{\cite{MR30135}}]\label{John}
Let $\Omega \subset \mathbb{R}^n$ be a nonempty bounded convex domain.  
Then there exists an ellipsoid $E$ of maximal volume contained in $\Omega$ such that, if $a \in \mathbb{R}^n$ denotes the center of $E$, the following inclusions hold:
 \begin{align*}
        E\subseteq \Omega \subseteq a+n(E-a).
    \end{align*} 
Where $a+n(E-a)=\{\,a+n(x-a): x\in E\,\}$ denotes the dilation of $E$ by a factor of $n$ with center $a$
\end{theorem}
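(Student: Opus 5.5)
We would prove Theorem~\ref{John} in three steps: show that a maximal-volume ellipsoid exists, normalize it to the unit ball by an affine map, and then rule out points of $\Omega$ far from the centre by building a strictly larger inscribed ellipsoid.

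\emph{Step 1: existence.} Every ellipsoid has the form $E = c + T(\hat B)$, with $c\in\mathbb{R}^n$ and $T$ a symmetric positive semidefinite matrix, and its volume is $\det T \cdot |\hat B|$. The condition $E\subseteq \overline{\Omega}$ is a closed condition on $(c,T)$. Since $\Omega$ is bounded, $c$ and $\|T\|_{op}$ stay bounded, so the admissible set of pairs $(c,T)$ is compact. Since $\Omega$ is a nonempty open set, it contains a small ball, so the supremum of $\det T$ is positive. Hence the supremum is attained at some $(a,T)$ with $T$ invertible. The interior of this ellipsoid lies in $\Omega$ because $\Omega$ is open and convex, so we may take $E$ of maximal volume contained in $\Omega$. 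The inclusion $E\subseteq\Omega$ is then immediate, and only the outer inclusion needs work.

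\emph{Step 2: normalization.} Both the hypotheses and the conclusion are invariant under non-singular affine maps: such a map sends ellipsoids to ellipsoids, multiplies all volumes by the same factor, and commutes with dilation about the centre. Applying $\phi^{-1}(y)=T^{-1}(y-a)$, we may therefore assume that $E=\hat B$ is the unit ball centred at $\textbf{0}$. The claim becomes $\Omega\subseteq n\hat B$.

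\emph{Step 3: contradiction.} Suppose some $x\in\Omega$ has $\|x\|=R>n$. After a rotation, $x=Re_1$. By convexity, $K=\mathrm{conv}(\hat B\cup\{x\})\subseteq\overline{\Omega}$. We look for an ellipsoid of revolution inside $K$ of the form
\[
E_\delta=\Bigl\{y:\ \frac{(y_1-\delta)^2}{(1+\delta)^2}+\frac{\|y'\|^2}{b(\delta)^2}\le 1\Bigr\},\qquad y=(y_1,y')\in\mathbb{R}\times\mathbb{R}^{n-1},
\]
with $\delta>0$ small. This ellipsoid still touches $-e_1$ and is stretched toward $x$. Its volume is proportional to $(1+\delta)\,b(\delta)^{n-1}$.

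The main obstacle is the tangency computation: we must find the largest admissible $b(\delta)$ for which $E_\delta\subseteq K$. Here $K$ is the union of $\hat B$ and the tangent cone from $x$, and the cone's half-angle satisfies $\sin\beta=1/R$. We would verify $E_\delta\subseteq K$ in each two-dimensional section containing the $e_1$-axis. The region $y_1\le 1/R$ should be controlled by the ball, and the region beyond it by the cone line. We expect this to give an expansion
\[
b(\delta)^2\ \ge\ 1-\frac{2\delta}{R}+O(\delta^2).
\]
If so, then
\[
(1+\delta)\,b(\delta)^{n-1}=1+\delta\Bigl(1-\frac{n-1}{R}\Bigr)+O(\delta^2),
\]
which exceeds $1$ for small $\delta$ whenever $R>n-1$, and in particular when $R>n$. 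Shrinking $E_\delta$ very slightly, which is possible because the volume gain is strict, puts it inside the open set $\Omega$ with volume still larger than $|\hat B|$. This contradicts maximality, so $\Omega\subseteq n\hat B$. Undoing the affine normalization gives $\Omega\subseteq a+n(E-a)$.

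If the first-order expansion of $b(\delta)$ proves delicate, our fallback is a different family of ellipsoids. We would take the axes $a=1+\delta$ along $e_1$ and $b=\sqrt{1-\eta}$ across, with centre $c$ chosen so that $E_\delta$ touches both $-e_1$ and the cone. We would then optimize over $\eta$ explicitly using the support function of $K$. This is the standard route to the sharp constant $n$.
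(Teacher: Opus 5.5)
The paper gives no proof of this statement; it is quoted from John \cite{MR30135}, so your argument has to stand on its own. Steps~1 and~2 are fine, and Step~3 (enlarging the unit ball inside $K=\mathrm{conv}(\hat B\cup\{x\})$ toward a far point) is the standard direct route. But Step~3 carries the whole theorem, and you only conjecture its key estimate. Moreover, the conjectured expansion $b(\delta)^2\ge 1-2\delta/R+O(\delta^2)$ is false. A sanity check shows it cannot be right: it gives a contradiction whenever $R>n-1$, so it would prove $\Omega\subseteq(n-1)\hat B$. The constant $n$ is sharp, however: for the regular simplex the maximal ellipsoid is the inscribed ball, and the vertices lie at distance $n$ times the inradius from its centre. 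For $n=2$ your estimate would even force the square $(-1,1)^2$, whose maximal ellipse is the unit disc, to lie in the closed unit disc.

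Here is the computation that should replace the guess. Compare support functions: for a unit vector $u$ with $u_1=t$,
\[
h_{E_\delta}(u)=\delta t+\sqrt{(1+\delta)^2t^2+b^2(1-t^2)},\qquad h_K(u)=\max\{1,Rt\},
\]
and $E_\delta\subseteq K$ iff $h_{E_\delta}\le h_K$ for all $t\in[-1,1]$. The binding direction is the normal to the cone along the sphere where the cone touches $\hat B$, namely $t=1/R$. There the condition reads $(1+\delta)^2/R^2+b^2(1-1/R^2)\le(1-\delta/R)^2$, which is exactly
\[
b^2\le 1-\frac{2\delta}{R-1}.
\]
So $R$ must be replaced by $R-1$, and your choice $b^2=1-2\delta/R$ does not fit in $K$.

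Conversely, take $b^2=1-\eta$ with $\eta=2\delta/(R-1)$. For $t\le 1/R$, squaring (legitimate since $1-\delta t>0$) turns the condition into $(2\delta+\eta)t^2+2\delta t-\eta\le 0$, whose roots are $-1$ and $\eta/(2\delta+\eta)=1/R$. On $[1/R,1]$, $h_{E_\delta}$ is convex in $t$ while $Rt$ is linear, and the inequality holds at both endpoints: equality at $t=1/R$, and $1+2\delta\le R$ at $t=1$. Hence $E_\delta\subseteq K$ and
\[
(1+\delta)\Bigl(1-\frac{2\delta}{R-1}\Bigr)^{\frac{n-1}{2}}=1+\delta\Bigl(1-\frac{n-1}{R-1}\Bigr)+O(\delta^2),
\]
which exceeds $1$ for small $\delta>0$ precisely because $R>n$. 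The first-order gain vanishes at $R=n$, so the argument has no slack, and the estimate must be computed rather than guessed.

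With this in place, the rest of your proof goes through: shrinking into $\Omega$ and undoing the affine map. Your fallback family is not actually different: fixing the semi-axis $1+\delta$ and tangency at $-e_1$ already forces the centre $\delta e_1$, so optimizing over $\eta$ is just the computation above.
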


Let $\Omega \subset \mathbb{R}^n$ be a bounded convex domain. We denote by $\sigma_k(\Omega)$ the $k$-th Steklov eigenvalue of the Laplacian on $\Omega$, counted with multiplicity. By applying Theorem~ \ref{thm:main}, and \ref{John} on domain $\Omega,$ we obtain Theorem~\ref{application}.

\begin{proof}[Proof of Theorem \ref{application}]
Let $\Omega \subset \mathbb{R}^n$ be a bounded convex domain.  
By John’s ellipsoid theorem, there exists an ellipsoid $E$ of maximal volume contained in $\Omega$, with center $a \in \mathbb{R}^n$, such that
\begin{equation}\label{eq:john}
    E \subset \Omega \subset a + n(E-a).
\end{equation}
Applying Theorem~\ref{thm:main} to the inclusions in \eqref{eq:john}, there exist constants $C_1, C_2 > 0$, depending only on the geometry of $\Omega$ and $E$, such that for every $k \in \mathbb{N}$,
\begin{equation}\label{eq:upper}
    \sigma_k(\Omega) \le C_1\, \sigma_k(E),
\end{equation}
and
\begin{equation}\label{eq:lower}
    \sigma_k\bigl(a + n(E - a)\bigr)
    = \frac{1}{n}\,\sigma_k(E)
    \le C_2\, \sigma_k(\Omega),
\end{equation}
where we have used the scaling property of Steklov eigenvalues under dilations.\\
Combining \eqref{eq:upper} and \eqref{eq:lower}, we obtain
\begin{equation}\label{eq:EllipsoidBound}
    \frac{1}{n C_2}\, \sigma_k(E)
    \le \sigma_k(\Omega)
    \le C_1\, \sigma_k(E).
\end{equation}
Next, let $\phi : \mathbb{R}^n \to \mathbb{R}^n$ be a non singular affine map defined by
\begin{align*}
     \phi(x) =\hat Ax + b,
\end{align*}
where $\hat A : \mathbb{R}^n \to \mathbb{R}^n$ is an invertible linear map and $b \in \mathbb{R}^n$ is a constant vector, chosen so that $\phi(\hat B) = E$, with $\hat B$ denoting the unit ball in $\mathbb{R}^n$.  
Set $\widetilde{\Omega} := \phi^{-1}(\Omega)$ and $\widetilde{a} := \phi^{-1}(a)$.

Applying $\phi^{-1}$ to the inclusions in \eqref{eq:john}, we obtain
\begin{align*}
   \hat B \subset \widetilde{\Omega} \subset \widetilde{a} + n(\hat B - \widetilde{a}). 
\end{align*}
Applying Theorem~\ref{thm:main} once again to the above inclusions, there exist constants $c_1, c_2 > 0$, depending only on the geometry of $\widetilde{\Omega}$ and $\hat B$, such that
\begin{equation}\label{eq:Upper}
    \sigma_k(\widetilde{\Omega}) \le c_1\, \sigma_k(\hat B),
\end{equation}
and
\begin{equation}\label{eq:Lower}
    \sigma_k\bigl(\widetilde{a} + n(\hat B - \widetilde{a})\bigr)
    = \frac{1}{n}\,\sigma_k(\hat B)
    \le c_2\, \sigma_k(\widetilde{\Omega}),
\end{equation}
where we have again used the scaling property of Steklov eigenvalues.\\
Combining \eqref{eq:Upper} and \eqref{eq:Lower}, we conclude that
\begin{equation*}
    \frac{1}{n c_2}\sigma_k(\hat B)
    \le \sigma_k(\widetilde{\Omega})
    \le c_1\, \sigma_k(\hat B).
\end{equation*}
\end{proof}

\section*{Acknowledgements} 
The corresponding author, S. Basak, is supported by the University Grants Commission, India.  S. Verma acknowledges the project grant provided by CSIR-ASPIRE, sanction order no. 25WS(011)/2023-24/EMR-II/ASPIRE.


\bibliographystyle{plain}
\bibliography{ref}

\end{document}